\documentclass[11pt,reqno]{amsart}
\usepackage[utf8]{inputenc}
\usepackage[x11names]{xcolor}
\usepackage{geometry}
\usepackage{amsmath,amsfonts,amssymb,amsthm,mathtools}
\usepackage{float}
\usepackage{tikz}
\usepackage{tikz-cd}
\usetikzlibrary{babel}
\usetikzlibrary{arrows.meta,decorations.markings}
\definecolor{graphblue}{RGB}{30,91,146}
\definecolor{graphorange}{RGB}{182,78,17}
\allowdisplaybreaks[1]

\usepackage{hyperref}
\hypersetup{
    colorlinks=true,
    linktoc=all,
    linkcolor=magenta,
    linktocpage,
    citecolor=DodgerBlue4
}

\newcommand{\Fp}{\mathbb F_p}
\newcommand{\Cp}{C_p}
\newcommand{\E}{\mathbb{E}}
\newcommand{\THH}{\operatorname{THH}}
\newcommand{\TR}{\operatorname{TR}}

\newtheorem{theorem}{Theorem}
\newtheorem{lemma}[theorem]{Lemma}
\theoremstyle{definition}

\begin{document}

\title{A short proof of the Segal conjecture for $C_p$}
\author{Thomas Nikolaus}
\date{\today}
\maketitle

Let $p$ be a prime and $C_p = \{1,\sigma, \ldots, \sigma^{p-1}\}$ the cyclic group of order $p$. We give a new and direct proof of the following result.
The version for finite spectra was proved by Jones and Wegmann
\cite[Theorem A(ii) and the result on page 474 for odd $p$]{JW}.
Lun{\o}e-Nielsen and Rognes proved the result for bounded below spectra
whose mod-$p$ homology is of finite type \cite[Theorem 5.13]{LNR}.
The statement in the generality given here is
\cite[Theorem III.1.7]{NS}.

\begin{theorem}\label{thm:main} 
For every bounded below spectrum $X$, the Tate diagonal
\[
X \longrightarrow (X^{\otimes p})^{tC_p},
\]
is a $p$-completion. Here  $C_p$ acts by cyclic permutation on $X^{\otimes p} = X \otimes \ldots \otimes X$.
\end{theorem}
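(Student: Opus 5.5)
The plan is to reduce to the case $X = \mathbb{S}$ (more precisely, to $X=H\Fp$ or to spheres), using that both sides are exact functors in suitable senses and commute with the appropriate limits and colimits on bounded below spectra.

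\textbf{Step 1: exactness.} The functor $T(X) = (X^{\otimes p})^{tC_p}$ is exact. This is the standard observation that the cross terms in $(X\oplus Y)^{\otimes p}$ are induced $C_p$-spectra, and induced spectra have vanishing Tate construction. Combined with the fact that $T$ preserves finite limits up to the same argument applied to cofiber sequences (via a filtration of $(\mathrm{cofib})^{\otimes p}$ whose graded pieces other than the extremes are induced), this shows $T$ is exact. The Tate diagonal $\Delta\colon X\to T(X)$ is a natural transformation of exact functors. Since $T(X)$ is $p$-complete for bounded below $X$ (multiplication by $p$ on a $C_p$-Tate construction is nilpotent on homotopy, by the module structure over $\mathbb{S}^{tC_p}$), it suffices to show that $\Delta$ is an $\Fp$-equivalence, i.e.\ that $\mathrm{cofib}(\Delta)\otimes \mathbb{S}/p = 0$. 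Equivalently, since both sides are exact, it suffices to show $\Delta_{X/p}$ is an equivalence.

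\textbf{Step 2: reduction to Eilenberg--MacLane spectra via convergence.} For bounded below $X$, I would write $X/p$ as the limit of its Postnikov tower $\tau_{\le n}X/p$. The key point is that $T$ commutes with this limit on bounded below objects. The connectivity of $\mathrm{fib}(X\to\tau_{\le n}X)$ grows linearly, and the $p$-fold tensor power connectivity grows like $p\cdot n$. The Tate construction is a colimit of homotopy fixed points over skeleta, and those convergence issues need care. Tate constructions do not commute with limits in general, but I would bound the fiber $T(X)\to T(\tau_{\le n}X)$ by expanding $T$ of an extension via the filtration in Step 1: the fiber has terms $T(F)$ with $F$ highly connected, plus induced terms that vanish. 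Then I would use the fact that $(Y)^{tC_p}$ for $Y$ $m$-connective is $(m-?)$-... This is where it gets subtle, since the Tate construction does not preserve connectivity. That is the main obstacle. I would try instead to use the formula $T(X)\simeq \lim_n T(\tau_{\le n}X)$, justified via the description $(Y)^{tC_p}=\lim$ over the Tate filtration, for which Tate of $p$-fold powers is uniformly bounded below relative to the connectivity of $X$. Once limits are handled, an induction on Postnikov stages together with exactness reduces everything to $X = H\Fp$ in a single degree, so to $\Sigma^k H\Fp$.

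\textbf{Step 3: the case $H\Fp$.} It remains to show that $H\Fp\to (H\Fp^{\otimes p})^{tC_p}$ is an equivalence, and similarly for its shifts. Exactness reduces shifts to $k=0$, once $T$ commutes with $\Sigma$. I would compute the target using the Tate spectral sequence with $E_2 = \hat H^*(C_p; \mathcal{A}_*^{\otimes p})$, where $\mathcal A_*$ is the dual Steenrod algebra. By the classical Tate cohomology computation, only the diagonal invariant classes $a\otimes\cdots\otimes a$ survive: non-diagonal monomials form free $C_p$-orbits, whose Tate cohomology vanishes. One then gets $\hat H^*(C_p;\Fp)\otimes\mathcal A_*$, with Frobenius-twisted degrees. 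The resulting homotopy should match $\pi_*H\Fp = \Fp$ only after differentials, and identifying these differentials is the Segal/Lin computation. To avoid this, I would instead compute mod $p$ homology of the map and use the ``Singer construction'' identification as in \cite{LNR}. This step, together with the convergence in Step 2, is where I expect the real difficulty to lie.
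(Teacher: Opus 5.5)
\textbf{The main gap is Step 3.} You never prove the case $X=\Fp$, and that case is the whole content of the theorem. Steps 1--2 are, modulo details, the formal reduction of Nikolaus--Scholze, which the paper simply cites.

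Your spectral sequence is also set up incorrectly. We have $\pi_*(\Fp^{\otimes p})\cong\mathcal A_*^{\otimes(p-1)}$, not $\mathcal A_*^{\otimes p}$, and $C_p$ does not act on it by permuting tensor factors. The picture ``non-diagonal monomials form free orbits, so only $\hat H^*(C_p;\Fp)\otimes\mathcal A_*$ survives'' is the Tate cohomology of the mod $p$ \emph{homology} $H_*(\Fp^{\otimes p};\Fp)\cong\mathcal A_*^{\otimes p}$. But $(-)^{tC_p}$ does not commute with $\Fp\otimes(-)$, and bridging exactly this is the job of the Singer construction and the continuous-homology convergence argument of Lun{\o}e-Nielsen--Rognes. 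So you are left with two options. Either you must determine the differentials in the honest Tate spectral sequence, which amounts to the theorem itself. Or you cite Lun{\o}e-Nielsen--Rognes, which just reproduces the original Nikolaus--Scholze proof rather than giving one.

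The idea you are missing is the paper's descent along the $C_p$-equivariant $\E_\infty$-map $\Fp^{\otimes p}\to\THH(\Fp)$:
\begin{enumerate}
\item Its fibre is $1$-connective, so the Amitsur complex is a limit diagram, and this survives $(-)^{tC_p}$.
\item Each term is $\Fp$ tensored with a graph that is equivariantly a circle with $i$ free orbits of loops attached. B\"okstedt periodicity and the Frobenius $u\mapsto x$ then give $\pi_*(A_i^{tC_p})\cong\Fp[\![z_1,\ldots,z_i]\!][x_0^{\pm1}]$, the completed Amitsur complex of $\Fp\to\Fp[x^{\pm1}]$.
\item This completed complex still resolves $\Fp$: write it as the inverse limit of the Amitsur complexes of $\Fp[x^{\pm p^n}]\subseteq\Fp[x^{\pm1}]$.
\end{enumerate}
Hence the descent spectral sequence collapses to $\pi_*\bigl((\Fp^{\otimes p})^{tC_p}\bigr)\cong\Fp$.

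\textbf{Steps 1--2 also need repair.} The ``main obstacle'' you identify is not one. $(-)^{hC_p}$ commutes with all limits, and $(-)_{hC_p}$ commutes with limits of towers whose fibres have connectivity tending to infinity. So their cofibre $(-)^{tC_p}$ commutes with the Postnikov limit of $X^{\otimes p}$ for bounded below $X$; this is the same argument as in Lemma~\ref{lem:tate-descent}.

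The real subtlety is elsewhere. Postnikov induction and exactness only reduce you to $\Sigma^k HV$ for arbitrary, possibly infinite-dimensional, $\Fp$-vector spaces $V$. Getting down to $V=\Fp$ requires commuting $T$ with infinite direct sums, which Tate constructions do not do in general. This needs a separate argument, which is part of the Nikolaus--Scholze reduction.

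Finally, your reason for $p$-completeness of $T(X)$ is false: $p$ is not nilpotent on $\pi_0(\mathbb{S}^{tC_p})\cong\mathbb{Z}_p$. The conclusion still holds by a different argument. Along the Postnikov tower of $X^{\otimes p}$, the Tate constructions of the layers are $H\mathbb{Z}^{tC_p}$-modules and hence killed by $p$. $p$-completeness then passes to the finite stages and to their limit.
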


For $X$ the sphere spectrum this recovers the Segal conjecture
for $C_p$, which was proved by Lin for $p=2$ \cite{Lin}
and by Gunawardena for odd $p$ \cite{Gun}.
The proof given in \cite{NS} is a formal reduction of Theorem \ref{thm:main}
to the special case $X=\Fp$, which then follows by 
Lun{\o}e-Nielsen and Rognes' finite-type theorem since $\Fp$ is of finite type.

Here we give a direct proof of the case $X=\Fp$ and hence, by this reduction, of the full theorem.
Our proof is inspired by Hahn and Wilson's descent argument for
$p=2$ \cite{HW} and its extension to odd primes by Angelini-Knoll
and Zou using spoke topological Hochschild homology
\cite[Sections~5--6]{AKZ}.
We however use a different descent here and do not need any computational input except the  well-known calculation
\begin{equation}\label{eq:input}
 \pi_*\THH(\Fp)=\Fp[u],\qquad
 \pi_*\THH(\Fp)^{t\Cp}=\Fp[x^{\pm1}].
\end{equation}
Here \(|u|=|x|=2\), and the cyclotomic Frobenius sends \(u\) to \(x\).
The first isomorphism in \eqref{eq:input} is due to B\"okstedt
\cite[Theorem~1.1(a), (d)]{Bok}; the Tate computation and the
identification of the cyclotomic Frobenius are due to
Hesselholt--Madsen \cite[equation~(33) and Proposition~5.3]{HM}.
For a direct proof of B\"okstedt periodicity from the dual Steenrod algebra
and its Dyer--Lashof operations, see Krause--Nikolaus
\cite[Theorems~1.1--1.2]{KN}. 
The statements \eqref{eq:input} can also be deduced entirely without any computation using polynomial functoriality and $\TR$ as in
\cite[Theorem~5.5 and Corollary~5.14]{BGMN}. \\

In order to prove the theorem we use the $C_p$-equivariant map of $\E_\infty$-rings
\[
 A_{-1} = \Fp^{\otimes p}\longrightarrow\THH(\Fp) = A_0
\]
induced by the inclusion of $p$  points into the circle (equivalently by the inclusion of the term in simplicial degree $p-1$ into the realization of the cyclic bar construction or by the universal property of the source as the free $\E_\infty$-algebra  under $\Fp$ with $C_p$-action).
We consider the Amitsur complex of this map with $i$-th term
$A_i:=A_0\otimes_{A_{-1}}\cdots\otimes_{A_{-1}}A_0$, with $i+1$ factors.
This is an augmented cosimplicial $\E_\infty$-ring with $C_p$-action:
\begin{equation}\label{bare_diagram}
\begin{tikzcd}[column sep=large]
A_{-1}
  \arrow[r, "\eta"]
&
A_0
  \arrow[r, shift left=0.5ex, "d^0"]
  \arrow[r, shift right=0.5ex, "d^1"']
&
A_1
  \arrow[r, shift left=1ex, "d^0"]
  \arrow[r, "d^1" description]
  \arrow[r, shift right=1ex, "d^2"']
&
A_2
  \arrow[r, shift left=1.5ex]
  \arrow[r, shift left=0.5ex]
  \arrow[r, shift right=0.5ex]
  \arrow[r, shift right=1.5ex]
&
\cdots
\end{tikzcd}
\end{equation}

\begin{lemma}\label{lem:tate-descent}
The diagram
\[
\begin{tikzcd}[column sep=large]
A_{-1}^{tC_p}
  \arrow[r, "\eta"]
&
A_0^{tC_p}
  \arrow[r, shift left=0.5ex, "d^0"]
  \arrow[r, shift right=0.5ex, "d^1"']
&
A_1^{tC_p}
  \arrow[r, shift left=1ex, "d^0"]
  \arrow[r, "d^1" description]
  \arrow[r, shift right=1ex, "d^2"']
&
A_2^{tC_p}
  \arrow[r, shift left=1.5ex]
  \arrow[r, shift left=0.5ex]
  \arrow[r, shift right=0.5ex]
  \arrow[r, shift right=1.5ex]
&
\cdots
\end{tikzcd}
\]
is a limit diagram.
\end{lemma}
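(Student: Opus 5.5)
The plan is to show that the augmented cosimplicial object $A_{-1}\to A_\bullet$ is already a limit diagram \emph{with good convergence} before applying $(-)^{tC_p}$. Good convergence here means that the partial totalizations converge with fibres whose connectivity tends to infinity. Since all objects carry compatible $C_p$-actions, I will then argue that the Tate construction commutes with this particular limit. The only input needed is a connectivity estimate for $\eta\colon A_{-1}\to A_0$, and the fact that $(-)^{tC_p}$ is exact and commutes with limits of towers of uniformly increasing connectivity.

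\textbf{Step 1: connectivity of $\eta$.} Both $A_{-1}=\Fp^{\otimes p}$ and $A_0=\THH(\Fp)$ are connective $\E_\infty$-rings with $\pi_0=\Fp$, and $\eta$ is a ring map, so $\pi_0(\eta)$ is an isomorphism. By \eqref{eq:input}, $\pi_1 A_0=0$, so $\pi_1(\eta)$ is trivially surjective. Hence the cofibre $C$ of $\eta$ has $\pi_0 C=\pi_1 C=0$. Equivalently, $I:=\mathrm{fib}(\eta)$ is $1$-connective. This is true for $p=2$ too, where $\pi_1(\mathbb F_2\otimes\mathbb F_2)\neq 0$ but maps to $0$. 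Since $\eta$ is $C_p$-equivariant, $I$ is an $A_{-1}$-module in spectra with $C_p$-action.

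\textbf{Step 2: identify the partial totalizations.} I will use the standard identification for the Amitsur complex of a map of commutative rings $R\to S$ with fibre $I$:
\[
\mathrm{fib}\bigl(R\to \mathrm{Tot}_n(S^{\otimes_R(\bullet+1)})\bigr)\simeq I^{\otimes_R (n+1)}.
\]
One proves this by induction on $n$ using the cofibre sequences $I\to R\to S$. Alternatively, one can note that after base change along $R\to S$ the augmented complex acquires an extra codegeneracy and splits. All constructions are natural, so the equivalence holds $C_p$-equivariantly. Since $I$ is $1$-connective and $A_{-1}$ is connective, $I^{\otimes_{A_{-1}}(n+1)}$ is $(n+1)$-connective. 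Hence
\[
A_{-1}\simeq \lim_n \mathrm{Tot}_n(A_\bullet)
\]
in spectra with $C_p$-action, and the fibres of $A_{-1}\to \mathrm{Tot}_n$ have connectivity tending to $\infty$.

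\textbf{Step 3: pass to Tate.} $\mathrm{Tot}_n$ is a finite limit and $(-)^{tC_p}$ is exact, so $\mathrm{Tot}_n(A_\bullet)^{tC_p}\simeq \mathrm{Tot}_n(A_\bullet^{tC_p})$. It remains to check that $(-)^{tC_p}$ commutes with the limit of this tower. Write $(-)^{tC_p}=\mathrm{cofib}\bigl((-)_{hC_p}\to(-)^{hC_p}\bigr)$. Homotopy fixed points commute with all limits. Homotopy orbits commute with the limit of this particular tower, because the fibres $F_n=I^{\otimes(n+1)}$ have $(F_n)_{hC_p}$ at least $(n+1)$-connective. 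Consequently $\lim_n (F_n)_{hC_p}=0$, and the Milnor sequence shows that $\lim_n$ commutes with $(-)_{hC_p}$ here. Combining these, $A_{-1}^{tC_p}\simeq \lim_n \mathrm{Tot}_n(A_\bullet^{tC_p})=\mathrm{Tot}(A_\bullet^{tC_p})$, which is the assertion of Lemma~\ref{lem:tate-descent}.

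The main obstacle I expect is Step 2. One must state and justify the equivariant identification of the fibre of $A_{-1}\to\mathrm{Tot}_n$ with $I^{\otimes(n+1)}$ cleanly, and check that its connectivity really grows linearly. Everything else is formal once Step 1 is in place, and Step 1 uses only $\pi_1\THH(\Fp)=0$ from \eqref{eq:input}.
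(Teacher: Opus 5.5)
Your proposal is correct and follows essentially the same route as the paper. Both arguments use that $I=\mathrm{fib}(\eta)$ is $1$-connective, identify the fibres of the $\mathrm{Tot}$-tower with $I^{\otimes_{A_{-1}}(n+1)}$, and then pass to $(-)^{tC_p}$ as the cofibre of the norm, using that $(-)^{hC_p}$ commutes with limits while $(-)_{hC_p}$ preserves connectivity. Your Step 1 also spells out why $I$ is $1$-connective ($\pi_0\cong\Fp$ on both sides and $\pi_1\THH(\Fp)=0$), which the paper only asserts.
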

\begin{proof}
The fibre $I$ of $A_{-1}\to A_0$ is $1$-connective.
The iterated fibres in the associated Tot-tower  are tensor
powers of $I$ over $A_{-1}$, so their connectivity tends to infinity.
Thus \eqref{bare_diagram} is a limit diagram; see also \cite{DR}.
Applying homotopy fixed points preserves this limit.
Applying homotopy orbits also gives a limit diagram, since
homotopy orbits preserve finite limits and connectivity.
The claim then follows since Tate is the cofibre of the norm map from
homotopy orbits to homotopy fixed points.
\end{proof}

The $i+1$ equivariant maps $A_0\to A_i$ induce after taking the Tate construction classes
$x_0,\ldots,x_i$ in $\pi_2(A_i^{tC_p})$, each the image of $x \in \pi_2(A_0^{tC_p})$.
Together these classes give rise to a map
\begin{equation}\label{eq:map}
 \Fp[x_0^{\pm1},\ldots,x_i^{\pm1}]
 \longrightarrow\pi_*(A_i^{tC_p}),
 \qquad |x_j|=2.
\end{equation}
As $i$ varies, these maps form a morphism of augmented cosimplicial graded
commutative rings. The source is augmented by $\Fp$, and the map
in degree $-1$ is induced by the Tate diagonal
$\Fp\to A_{-1}^{tC_p}$. The cosimplicial maps in the source act
on the indices of the generators.
Consider the ideal
\[
 I_i=(x_s-x_t\mid 0\leq s,t\leq i)
 \subseteq \Fp[x_0^{\pm1},\ldots,x_i^{\pm1}].
\]

\begin{lemma}\label{prop:completion}
For $i \geq 0$ the map \eqref{eq:map} exhibits $\pi_*(A_i^{tC_p})$ as the degreewise $I_i$-adic completion of $\Fp[x_0^{\pm1},\ldots,x_i^{\pm1}]$.
\end{lemma}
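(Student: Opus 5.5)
The plan is to compute $\pi_*(A_i^{tC_p})$ with the Tate spectral sequence. First I would identify the underlying $C_p$-equivariant homotopy of $A_i$, then run the spectral sequence, and finally compare the result with the map \eqref{eq:map} through the filtration.

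\emph{Step 1: the underlying homotopy of $A_i$.} Each $A_0=\THH(\Fp)$ is factorization homology of $\Fp$ over a circle, and $A_{-1}=\Fp^{\otimes p}$ is factorization homology over $p$ points. So I would identify $A_i$, $C_p$-equivariantly, with factorization homology of $\Fp$ over a graph $\Gamma_i$. This graph consists of $i+1$ circles glued along a common set of $p$ points, with $C_p$ acting freely by rotation.

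A count gives $\chi(\Gamma_i)=p-p(i+1)=-pi$, so $\Gamma_i$ is homotopy equivalent to a wedge of $pi+1$ circles. Choosing a basepoint and applying the Künneth formula with \eqref{eq:input}, I expect
\[
\pi_*(A_i)\cong \Fp[u_0]\otimes\bigotimes_{j=1}^{i}\Fp[y_{j,1},\ldots,y_{j,p}],\qquad |u_0|=|y_{j,k}|=2.
\]
The intended choice of generators is as follows. The class $u_0$ is the image of $u$ under the first coface $A_0\to A_i$. For $j\geq 1$, the class $y_{j,k}$ comes from a loop going from the $k$-th marked point to the $(k+1)$-st along circle $0$ and back along circle $j$. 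With these choices $C_p$ should fix $u_0$ and cyclically permute the $y_{j,k}$ in $k$.

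The $C_p$-action is only determined up to this choice of basis. To control it, I would work at the level of the first homology $H_1(\Gamma_i;\Fp)$, which as a $C_p$-representation is trivial $\oplus$ $i$ copies of the regular representation. The Hurewicz-type map $H_1(\Gamma_i)\to\pi_2(A_i)$ is equivariant and generates the polynomial ring.

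\emph{Step 2: the Tate spectral sequence.} The $E_2$-page is $\hat H^{-*}(C_p;\pi_*A_i)$. The polynomial ring on trivial $\oplus$ regular$^{\oplus i}$ splits as a $C_p$-module into two kinds of summands. Monomials not fixed by $C_p$ lie in free orbits, and these die in Tate cohomology. Fixed monomials are products of $u_0$ and norms $N_j=\prod_k y_{j,k}$. Hence
\[
E_2=\Fp[u_0,N_1,\ldots,N_i]\otimes \hat H^{-*}(C_p;\Fp).
\]
This is exactly $\Fp[u_0]\otimes\hat H^{-*}(C_p;\Fp)$ (the $E_2$-page for $A_0^{tC_p}$) tensored with one polynomial generator $N_j$ in degree $2p$ for each $j$.

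The differentials are then forced by naturality along the $i+1$ coface maps $A_0\to A_i$ and along the codegeneracies. In the $A_0$ case, \eqref{eq:input} shows that the differentials kill $u_0$ against $\hat H$-classes, leaving $\Fp[t^{\pm1}]$ with $x$ detected by $t^{-1}$. For $A_i$, I expect the $N_j$ to survive. Each $N_j$ should detect $x_0^{p-1}(x_j-x_0)$ or a similar element of $I_i$, sitting in positive Tate filtration.

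\emph{Step 3: matching the filtrations.} I would show that the Tate filtration on $\pi_*(A_i^{tC_p})$, pulled back along \eqref{eq:map}, agrees up to cofinality with the $I_i$-adic filtration in each degree. Concretely, $x_j-x_0$ should be detected by $N_j$ times a unit in positive filtration, and $\operatorname{gr}$ of the $I_i$-adic filtration should match the $E_\infty$-page. The Tate spectral sequence converges conditionally and is degreewise complete, because $\pi_*A_i$ is bounded below and of finite type. So a map inducing an isomorphism on associated graded pieces gives an isomorphism with the degreewise completion.

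The main obstacle is Step 2 together with the detection claim in Step 3. One must pin down the differentials on $\Fp[u_0,N_j]\otimes\hat H^{-*}$ and show that $x_j-x_0$ is detected by $N_j$ rather than being zero or of lower filtration. My first attempt would be to reduce to $i=1$ using the codegeneracies and the multiplicative structure over the $i+1$ copies of $A_0^{tC_p}$. For $i=1$, I would restrict along the coface maps $A_1\to A_2$ to compare $N_1$ with the classes $x_0,x_1$.
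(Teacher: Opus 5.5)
\textbf{The genuine gap is the detection claim in Step 3.} You need that $x_j-x_0$ is detected by a unit multiple of $t^{p-1}N_j$. Without this the comparison map is not known to be surjective, and the tools you propose cannot supply it.

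Grant Step 2 and write $\pi_0(A_1^{tC_p})=\Fp[\![w]\!]$ with $w$ detected by $t^pN_1$. Then $x_1=x_0f(w)$ with $f(0)=1$, since both $x_0$ and $x_1$ are detected by $t^{-1}$. Restricting along the cofaces $A_1\to A_2$ only gives the cocycle identity $f(w_{02})=f(w_{01})f(w_{12})$, with $w_{12}\equiv w_{02}-w_{01}$ modulo higher filtration. This forces the leading term of $f-1$ to have $p$-power degree, not degree one.

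In fact, everything you use is also present in the following model. The ingredients are the ring structure, a complete filtration with associated graded $\Fp[t^{\pm1},N_1,\ldots,N_i]$, and cofaces and codegeneracies acting on leading terms by $N_1\mapsto N_b-N_a$ and $N_1\mapsto 0$. Let $C^i=\Fp[x_0^{\pm1},\ldots,x_i^{\pm1}]^\wedge_{I_i}$ and take the Veronese $E^\bullet=\bigoplus_k C^\bullet_{2pk}$, regraded by dividing degrees by $p$, with $X_j:=x_j^p$. The cosimplicial maps permute the $X_j$, and $E^i\cong\Fp[\![z_1,\ldots,z_i]\!][X_0^{\pm1}]$. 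Yet $X_1/X_0=(1+z_1)^p=1+z_1^p$, so the image of the $I_1$-adic completion is only $\Fp[\![z_1^p]\!][X_0^{\pm1}]$.

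What is missing is an input relating the Frobenius classes $x_j$ to the norm classes. The paper gets it from the indexed Tate diagonal of the covering $T_i\to T_i/C_p$. This gives a \emph{linear} map $H_1(T_i/C_p;\Fp)\cong\pi_2(\Fp^{\otimes(T_i/C_p)})\to\pi_2(A_i^{tC_p})$. On the circle $a_j$ it restricts to the cyclotomic Frobenius, so $a_j\mapsto x_j$. On the loop $a_ja_0^{-1}$, over which the covering is trivial, it restricts to the ordinary Tate diagonal, so it hits the norm class $y_j$. Linearity then gives $y_j=x_j-x_0$ on the nose.

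\textbf{A smaller issue is in Step 2.} Naturality along cofaces and codegeneracies does not force $N_j$ to survive: a differential $d_3(N_j)=c\,etu_0N_j$ is compatible with all of those maps. For odd $p$ you can close this with multiplicativity. Write $d_3(N_j)=etm$ with $m$ of degree $2p+2$. By naturality from $A_0$, $d_3(e)=t^2u_0$ up to a unit and $d_3(t)=d_3(u_0)=0$; also $e^2=0$. Hence $0=d_3^2(N_j)=t^3u_0m$ up to a unit, so $m=0$. Then $E_4=\Fp[t^{\pm1},N_1,\ldots,N_i]$ is concentrated in even total degrees and the spectral sequence collapses. For $p=2$ you would need another argument, for instance that $N_j$ comes from the equivariant norm map $S^{2\rho_p}\to N(P_i)\to A_i$.

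The paper sidesteps the spectral sequence entirely. The equivariant edge slide gives $A_i\simeq A_0\otimes_{A_{-1}}N(P_i)$, so $A_i$ splits $C_p$-equivariantly into induced summands and summands $\Sigma^{2d\rho_p}A_0$. This yields $A_i^{tC_p}\simeq\prod\Sigma^{2d}A_0^{tC_p}$ directly. The classes you call $t^{p-1}N_j$ appear there as honest homotopy classes $y_j$, the Tate-diagonal images of $u_j$.
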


The completion is taken in the graded sense. Set
$z_j:=(x_j-x_0)/x_0$ for $j=1,\ldots,i$. Then
\begin{equation}\label{identification}
 C^i:=\Fp[x_0^{\pm1},\ldots,x_i^{\pm1}]^\wedge_{I_i}
 =\Fp[\![z_1,\ldots,z_i]\!][x_0^{\pm1}],
 \qquad |z_j|=0.
\end{equation}
i.e. a 2-periodic version of  the power series ring $\Fp[\![z_1,\ldots,z_i]\!]$. These coordinates $z_j$ of course depend on the
choice of $x_0$ and the cosimplicial maps are a bit messy in this description.
\begin{proof}
The $\E_\infty$-ring $\THH(\Fp)$ is obtained by tensoring
$\Fp$ with $S^1$ in the category of $\E_\infty$-rings, while $\Fp^{\otimes p}$ is obtained by
tensoring with the free $C_p$-set of $p$ points, embedded in $S^1$
as the $p$th roots of unity. Since tensoring preserves colimits,
$A_i$ is obtained by tensoring $\Fp$ with the $C_p$-space $T_i$
formed by gluing $i+1$ circles along these marked points.
The quotient $T_i/C_p$ is a wedge of $i+1$ circles.
Choosing the zeroth circle as the base circle gives the following picture:

\begin{figure}[H]
\centering
\begin{tikzpicture}[x=1cm,y=1cm,font=\small,
 base/.style={draw=graphblue,line width=1.2pt},
 first/.style={draw=graphorange,line width=1pt,densely dashed},
 second/.style={draw=teal!75!black,line width=1pt},
 third/.style={draw=violet!80!black,line width=1.1pt,densely dotted},
 directed/.style={postaction={decorate},decoration={markings,
  mark=at position 0.55 with {\arrow{Stealth[length=1.6mm,width=0.9mm]}}}},
 point/.style={circle,fill=black,inner sep=1.8pt}]

\begin{scope}
\foreach \a/\b in {90/210,210/330,330/90}{
 \draw[base,directed] (\a:2) -- (\b:2);
 \draw[first,directed] (\a:2) to[bend right=20] (\b:2);
 \draw[second,directed] (\a:2) to[bend right=40] (\b:2);
 \draw[third,directed] (\a:2) to[bend right=60] (\b:2);
}
\foreach \a in {90,210,330}{
 \node[point] at (\a:2) {};
}
\node at (0,0) {$T_3$};
\end{scope}
\end{tikzpicture}
\caption{The space $T_3$ for $p=3$: four circles glued at three marked
points. The blue triangle is the chosen base circle. The other
circles are orange, green and purple. The $C_3$ action is given by rotation.}
\label{fig:three-points-four-circles}
\end{figure}

The space $T_i$ is equivariantly homotopy equivalent to the base circle
with $i$ free $C_p$-orbits of additional loops attached at the marked points:

\begin{figure}[H]
\centering
\begin{tikzpicture}[x=1cm,y=1cm,font=\small,
 base/.style={draw=graphblue,line width=1.2pt},
 first/.style={draw=graphorange,line width=1pt,densely dashed},
 second/.style={draw=teal!75!black,line width=1pt},
 third/.style={draw=violet!80!black,line width=1.1pt,densely dotted},
 directed/.style={postaction={decorate},decoration={markings,
  mark=at position 0.55 with {\arrow{Stealth[length=1.6mm,width=0.9mm]}}}},
 point/.style={circle,fill=black,inner sep=1.8pt}]

\foreach \a/\b in {90/210,210/330,330/90}{
 \draw[base,directed] (\a:2) -- (\b:2);
}

\foreach \a in {90,210,330}{
 \begin{scope}[shift={(\a:2)},rotate=\a]
  \foreach \angle/\loopstyle in {-45/first,0/second,45/third}{
   \begin{scope}[rotate=\angle]
    \draw[\loopstyle,directed] (0,0)
     .. controls (1.35,-0.45) and (1.35,0.45) .. (0,0);
   \end{scope}
  }
 \end{scope}
}
\foreach \a in {90,210,330}{
 \node[point] at (\a:2) {};
}
\end{tikzpicture}
\caption{The homotopy equivalent space for $p=3$, $i=3$: the blue base circle
with three loops at each marked point. Each colour forms a free
$C_3$-orbit under rotation by $120^\circ$.
The homotopy equivalence to the graph $T_i$ of Figure~\ref{fig:three-points-four-circles}
fixes the blue circle and sends each coloured loop to the
corresponding coloured edge followed by the blue edge backwards.}
\label{fig:equivariant-edge-slide}
\end{figure}

Put
$
 P_i=\THH(\Fp)^{\otimes_{\Fp}i}$ and $
 N(P_i)=P_i^{\otimes p},
$
where $C_p$ cyclically permutes the factors of $N(P_i)$. From the pushout of spaces we get
\begin{equation}\label{basechange}
 A_i\simeq A_0\otimes_{A_{-1}}N(P_i).
\end{equation}
Let $u_1,\ldots,u_i$ be the B\"okstedt classes of the attached loops at one of the vertices.
Then
$
 \pi_*P_i=\Fp[u_1,\ldots,u_i]$
with $|u_j| = 2$ for all $j$.
Since the tensor products defining $N(P_i)$ are taken over the sphere,
we get
\[
 \pi_*N(P_i)=
 (\pi_*A_{-1})[u_1,\ldots,u_i,\sigma u_1,\ldots,\sigma u_i,
              \ldots,\sigma^{p-1}u_1,\ldots,\sigma^{p-1}u_i],
\]
where $\sigma$ is a generator of $C_p$. More precisely, $N(P_i)$
decomposes as a direct sum of suspensions of $A_{-1}$ indexed by
monomials. By \eqref{basechange}, we obtain
\[
\pi_*(A_i) = \pi_*(A_0) [u_1,\ldots,u_i,\sigma u_1,\ldots,\sigma u_i,
              \ldots,\sigma^{p-1}u_1,\ldots,\sigma^{p-1}u_i].
\]
Thus $A_i$, as a spectrum, is a direct sum of shifted copies of $A_0$.
On homotopy groups, the $C_p$-action cyclically permutes the variables as
indicated. Since $p$ is prime, every monomial is either fixed or
lies in an orbit of length $p$.
The fixed monomials are precisely
\[
 \prod_{j=1}^i
 \bigl(u_j\,\sigma u_j\cdots\sigma^{p-1}u_j\bigr)^{n_j},
 \qquad n_1,\ldots,n_i\geq 0.
\]
The orbits of length $p$ give summands that are induced, so that their Tate
constructions vanish (the projection formula identifies
the base change of each induced $A_{-1}$-module with an induced
$A_0$-module).

For a fixed monomial, set $d=n_1+\cdots+n_i$. Its summand is
\[
 \Sigma^{2d\rho_p}A_0,\qquad \rho_p=\mathbb R[C_p].
\]
The representation suspension records the cyclic permutation of
the $p$ copies of $\mathbb R^{2d}$. The diagonal inclusion $\mathbb R^{2d}\to(\mathbb R^{2d})^p$
induces an equivalence
$
 \Sigma^{2d}A_0^{tC_p}
 \xrightarrow{\ \simeq\ }
 \bigl(\Sigma^{2d\rho_p}A_0\bigr)^{tC_p}
$.
There are only finitely many monomials in each degree, and the
connectivity of the summands tends to infinity with the degree.
Thus the direct sum of the summands agrees with their product and the Tate construction preserves products of uniformly bounded below spectra. Thus we obtain
\begin{equation}\label{eq:graph-product}
 A_i^{tC_p}\simeq
 \prod_{n_1,\ldots,n_i\geq0}
 \Sigma^{2(n_1+\cdots+n_i)}A_0^{tC_p} .
\end{equation}
To write this more algebraically let $y_j$ denote the resulting degree-two class corresponding to
$u_j\,\sigma u_j\cdots\sigma^{p-1}u_j$. The multiplication is given by adding exponents.
Thus together with \eqref{eq:input} and the description of $C^i$ above this gives
an isomorphism of graded rings
\[
\Fp[\![z_1,\ldots,z_i]\!][x_0^{\pm1}]\xrightarrow{\;\cong\;}\pi_*(A_i^{tC_p}),
 \qquad x_0\longmapsto x_0,\quad z_j\longmapsto y_j/x_0.
\]

It remains to check that this isomorphism extends \eqref{eq:map} under the identification \eqref{identification}, i.e. that the generators really are what their name suggests.
The class $y_j$ is by construction the image of $u_j$ under
\[
 P_i\longrightarrow N(P_i)^{tC_p}\longrightarrow A_i^{tC_p},
\]
where the first map is the Tate diagonal. Indeed, apply naturality
to a representative $S^2\to P_i$: its $p$-fold tensor product defines
the fixed monomial, and the diagonal $S^2\to S^{2\rho_p}$ gives
exactly the normalization used above.

The (indexed) Tate diagonal for the covering $T_i\to T_i/C_p$
\cite[Corollary 10.3 and Example 10.4]{Lawson}, together with the B\"okstedt classes,
induces a linear map
\[
 H_1(T_i/C_p;\Fp)
 \cong \pi_2(\Fp^{\otimes(T_i/C_p)})
 \longrightarrow \pi_2(A_i^{tC_p}).
\]
If we write $a_j$ for the $j$-th original quotient circle, then its
class maps to $x_j$, since the construction restricts to the cyclotomic
Frobenius. On the attached loop $a_ja_0^{-1}$ the covering is trivial,
so the construction restricts to the ordinary Tate diagonal and its
class maps to $y_j$. By linearity we get $y_j=x_j-x_0$, which proves
the claim.
\end{proof}

\begin{lemma}\label{prop:completed-descent}
The augmented cosimplicial graded abelian group $\Fp \to C^\bullet$ is a derived limit diagram.  Equivalently,
\begin{equation}\label{eq:cohomology}
 H^s(C^\bullet)_t=
 \begin{cases}
 \Fp,&s=t=0,\\
 0,&\text{otherwise}.
 \end{cases}
\end{equation}
\end{lemma}

Without completion, the cosimplicial object $C^\bullet$ is the Amitsur complex of the faithfully
flat map $\Fp\to\Fp[x^{\pm1}]$ so it is a limit. 
Geometrically, this corresponds to the \v{C}ech
nerve of $\mathbb G_m\to\mathrm{pt}$. Completion takes the formal
neighbourhood of the diagonal in each term. We were hoping for some general formal geometry descent statement for this situation but have not been able to find one, so we give a direct proof by showing that 
degreewise completion preserves exactness by expressing it
as an inverse limit of ordinary Amitsur complexes. 

\begin{proof}
All limits below are taken  levelwise in the internal grading.
Put $R=\Fp[x^{\pm1}]$ and $R_n=\Fp[x^{\pm p^n}]\subseteq R$
for $n\geq0$, with $|x|=2$.
Following \cite[Remark 4.12]{Ogus} we
consider the Amitsur complexes
\begin{equation*}
 C_n^i:=R^{\otimes_{R_n}(i+1)}\cong
 \frac{\Fp[x_0^{\pm1},\ldots,x_i^{\pm1}]}
 {((x_1-x_0)^{p^n},\ldots,(x_i-x_0)^{p^n})}.
\end{equation*}
Since $R_n\subseteq R$ is faithfully flat, these resolve $R_n$.
The defining ideals are cofinal with the powers of $I_i$, and the
transition maps are surjective in every degree.
Thus $C^\bullet\simeq\mathrm R\!\varprojlim_n C_n^\bullet$ as cosimplicial objects, and
commuting derived limits gives
\[
 \mathrm R\!\varprojlim_{\Delta} C^\bullet
 \simeq \mathrm R\!\varprojlim_n
          \mathrm R\!\varprojlim_{\Delta} C_n^\bullet
 \simeq \mathrm R\!\varprojlim_n R_n
 \simeq \Fp.
\]
For the last equivalence, the tower $(R_n)_{2k}$ is constant
$\Fp$ for $k=0$ and eventually zero for $k\ne0$, since $p^n$
eventually does not divide $k$. All odd degrees vanish.
These equivalences identify the given augmentation with an
isomorphism, proving the claim.
\end{proof}

\begin{proof}[Proof of Theorem~\ref{thm:main}]
By the formal reduction in \cite[proof of Theorem III.1.7]{NS},
it suffices to prove that the Tate diagonal is an equivalence for
$X=\Fp$. Lemmas~\ref{prop:completion} and~\ref{prop:completed-descent}
show that
\[
 H^s\bigl(\pi_t(A_\bullet^{tC_p})\bigr)=
 \begin{cases}
 \Fp,&s=t=0,\\
 0,&\text{otherwise}.
 \end{cases}
\]

By Lemma~\ref{lem:tate-descent}, we get a conditionally convergent descent spectral sequence
\[
 E_2^{s,t}
 =H^s\bigl(\pi_t(A_\bullet^{tC_p})\bigr)
 \Rightarrow\pi_{t-s}(A_{-1}^{tC_p}).
\]
This half-plane spectral sequence collapses at $E_2$, so it
converges strongly by
\cite[Theorem 7.1]{Boardman}.
Thus $\pi_*(A_{-1}^{tC_p})\cong\Fp$, concentrated in degree zero.
Since the Tate diagonal
$\Delta_p:\Fp\to A_{-1}^{tC_p}$ is unital, it is an equivalence,
completing the proof.
\end{proof}

\subsection*{Acknowledgments.} 
The author would like to thank Thorger Gei\ss, Jeremy Hahn, Ishan Levy, Phil Pützstück and Maxime Ramzi for helpful discussions and comments on the proof.

The author was funded by the Deutsche Forschungsgemeinschaft (DFG, German Research
Foundation) -- Project-ID 427320536 -- SFB 1442, as well as under Germany’s Excellence Strategy
EXC2044/2–390685587, Mathematics M\"unster: Dynamics–Geometry–Structure.

\subsection*{AI Statement.} AI was used to improve this document: help with grammar, spelling, phrasing, to provide
referee-style feedback on the proofs and the writing, including
identifying possible gaps and suggesting improvements, find references and generate the figures.

\end{document}